\theoremstyle{plain}
\newtheorem{theorem}{Theorem}[section]
\newtheorem{lemma}[theorem]{Lemma}
\newtheorem{proposition}[theorem]{Proposition}
\newtheorem{Bounded Diameter Lemma}[theorem]{Bounded Diameter Lemma}
\theoremstyle{definition}
\newtheorem{definition}[theorem]{Definition}
\newtheorem{remark}[theorem]{Remark}
\newcommand{\Hmm}[1]{\leavevmode{\marginpar{\tiny%
$\hbox to 0mm{\hspace*{-0.5mm}$\leftarrow$\hss}%
\vcenter{\vrule depth 0.1mm height 0.1mm width \the\marginparwidth}%
\hbox to
0mm{\hss$\rightarrow$\hspace*{-0.5mm}}$\\\relax\raggedright #1}}}
\def\dist{{\text{dist}}}
\DeclareFixedFont{\Acknowledgment}{OT1}{cmr}{bx}{n}{14pt}
\begin{document}

\title{Circle patterns with obtuse exterior intersection angles}
\author{Ze Zhou}
\date{}


\maketitle

\begin{abstract}
Thurston's Circle Pattern Theorem studies existence and rigidity of circle patterns  of a given combinatorial type and the given non-obtuse exterior intersection angles. Using topological degree theory, variational principle, Teichm\"{u}ller theory, and Sard's Theorem, this paper generalizes Circle Pattern Theorem to the case of obtuse exterior intersection angles.

\medskip
\noindent{\bf Mathematics Subject Classifications (2000):} 52C26, 52C25.

\end{abstract}

\section{Introduction}
  The patterns of circles were introduced as useful tools to study  hyperbolic 3-manifolds by Thurston \cite{Thurston}. He also conjectured that, under a procedure of refinement, the hexagonal circle packings converge to the classical Riemann mapping \cite{Thurston2}. In 1987 this conjecture was resolved by Rodin-Sullivan \cite{Rodin-Sullivan}. Over the past decades, circle patterns (packings) have been bridging combinatorics \cite{Schramm,Liu-Zhou}, discrete and computational geometry \cite{Dai-Gu-Luo,Stephenson}, minimal surfaces \cite{Bobenko-Hoffman-Springborn} and others.

Let $S$ be an oriented closed surface and  $\mathcal T$ be a triangulation of $S$ with the sets of vertices, edges and triangles $V,E,F$. Suppose that $S$ is equipped with a constant curvature metric $\mu$. A circle pattern $\mathcal P$ on $(S,\mu)$ is a collection of oriented circles. Say $\mathcal P$ is $\mathcal T$-type if there exists a geodesic triangulation $\mathcal T(\mu)$ of $(S,\mu)$ with the following properties: $(i)$ $\mathcal T(\mu)$ is isotopic to $\mathcal T$; $(ii)$ the vertices of $\mathcal T(\mu)$ coincide with the centers of  circles in $\mathcal P$. In this paper we focus on these circle patterns $\mathcal P=\{C_v: v\in V\}$ such that $C_{u}, C_{w}$ intersect with each other whenever there exists an edge between $u$ and $w$. Then we have the exterior intersection angle $\Theta(e)\in[0,\pi)$ for any $e\in E$.  One refers to Stephenson's monograph
\cite{Stephenson} for more background.

Given a function $\Theta: E\to[0,\pi)$ defined on the edge set of $\mathcal T$, let us consider the following question: Does there exist a $\mathcal T$-type circle pattern whose exterior intersection angle function is given by $\Theta $? If it does, to what extent is the circle pattern unique? A celebrated answer to this question is the following Circle Pattern Theorem due to Thurston \cite[Chap. 13]{Thurston}.

\begin{theorem}[Thurston]\label{T-1-1}
Let $\mathcal T$ be a triangulation of an oriented closed surface $S$ of genus $g>0$. Suppose that $\Theta: E \to [0,\pi/2]$ is a function satisfying the following conditions:
\begin{itemize}
 \item[$(i)$] If the edges $e_1,e_2,e_3$ form a null-homotopic closed curve in $S$, and if $\sum_{i=1}^3 \Theta(e_i)\geq \pi$, then these edges form the boundary of a triangle of $\mathcal T$.
\item[$(ii)$] If the edges $e_1,e_2,e_3,e_4$ form a null-homotopic closed curve in $S$ and if $\sum_{i=1}^4 \Theta(e_i)=2\pi$, then these edges form the boundary of the union of two adjacent triangles.
\end{itemize}
Then there exists a constant curvature (equal to $0$ for $g=1$ and equal to $-1$ for $g>1$) metric $\mu$ on $S$ such that $(S,\mu)$ supports a $\mathcal T$-type circle pattern $\mathcal P$ with the exterior intersection angles given by $\Theta$. Moreover,  the pair $(\mu,\mathcal P)$ is unique up to isometry if $g>1$, and up to similarity if $g=1$.
\end{theorem}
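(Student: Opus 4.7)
The plan is to parametrize candidate configurations by assigning to each vertex $v$ a positive radius $r_v$, and for each face $f=[v_iv_jv_k]$ of $\tau$ constructing a geodesic triangle in the model geometry ($\mathbb{H}^2$ when $g\geq2$, $\mathbb{R}^2$ when $g=1$) whose edge lengths are determined by the circle-intersection formula; for instance in the hyperbolic case,
\[
\cosh \ell_{ij} \;=\; \cosh r_i\cosh r_j \,+\, \sinh r_i\sinh r_j\cos \Theta(e_{ij}).
\]
Condition $(i)$ guarantees that, for any choice of positive radii, the three formal lengths around a face satisfy the strict triangle inequality and hence define a nondegenerate geometric triangle. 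Gluing these triangles along shared edges produces a constant-curvature cone metric on $S$; the metric is smooth\,---\,and yields the desired circle pattern via circles of radii $r_v$ about each vertex\,---\,precisely when the total interior angle around each vertex equals $2\pi$.

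I would then formulate existence as finding a zero of the curvature map
\[
K:\Omega\longrightarrow \mathbb{R}^{V},\qquad K_v(r)\;=\;2\pi\,-\,\sum_{f\ni v}\alpha_v^{f}(r),
\]
where $\Omega\subset \mathbb{R}^V_{>0}$ is the open set on which every face is nondegenerate and $\alpha_v^f$ is the interior angle at $v$ of the geometric triangle built on $f$. A Schl\"afli-type identity shows that, in the coordinates $u_v=\log\tanh(r_v/2)$ (hyperbolic) or $u_v=\log r_v$ (Euclidean), the $1$-form $\sum_v K_v\,du_v$ is closed, so it equals $dF$ for a strictly concave functional $F$ on the convex region carved out by the triangle inequalities in $u$-space. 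Conditions $(i)$ and $(ii)$ are then exactly the combinatorial input that controls the boundary behaviour of $F$: $(i)$ rules out the collapse of a single face, and $(ii)$ rules out a neighbourhood degeneration around a single edge where four angles could exceed $2\pi$; together they force the supremum of $F$ to be attained at an interior critical point of $\Omega$, which is a zero of $K$.

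For rigidity I would invoke the maximum-principle argument of Thurston: given two solutions $r,r'$, consider a vertex $v$ maximizing $r'_v/r_v$ in the hyperbolic case (respectively $r'_v-r_v$ in the Euclidean case). The monotonicity of each $\alpha_v^{f}$, decreasing in its own radius and increasing in the two opposite radii, together with the identity $K_v(r)=K_v(r')=0$, forces the same maximizing ratio or difference at every neighbour $w$ of $v$; propagating along the connected $1$-skeleton gives $r\equiv r'$. The pair $(\mu,\mathcal{P})$ is then recovered up to isometry of the universal cover, hence up to conformal or anti-conformal equivalence of $S$.

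The step I expect to be the principal obstacle is the boundary analysis underlying existence: translating the purely topological conditions $(i)$ and $(ii)$ into the analytic statement that no maximizing sequence of $F$ escapes to $\partial\Omega$. The Schl\"afli identity, the concavity of $F$, and the maximum-principle rigidity are essentially formal once the right coordinates and functional are in hand; it is the combinatorial case analysis hidden in $(i)$--$(ii)$, and in particular the exclusion of every degeneration mode that is compatible with positive radii, where the genuine difficulty lies.
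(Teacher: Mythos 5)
The paper does not prove Theorem \ref{T-1-1}; it cites Thurston, who argued via the method of continuity, and it lists several alternative proofs (Marden--Rodin, Stephenson, De Verdi\`{e}re, Chow--Luo). For its own generalization to obtuse angles (Theorem \ref{T-1-3}) the paper uses a topological-degree argument rather than either continuity or the variational principle. Your proposal is the variational route of De Verdi\`{e}re and Chow--Luo combined with Thurston's maximum-principle rigidity, so it is a legitimate alternate route to the cited theorem, not a paraphrase of the paper's strategy.

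One claim is factually wrong and should be corrected. You assert that condition~$(i)$ is what guarantees the triangle inequality for the three side lengths of each face, for every choice of positive radii. That is not the role of $(i)$: for non-obtuse exterior intersection angles $\Theta\in[0,\pi/2]$, the triangle inequality holds unconditionally for all positive radii (this is Lemma~\ref{L-2-1}, and it is the starting point of Thurston's construction; the delicate analogue for obtuse angles is Lemma~\ref{L-2-4}). Condition~$(i)$ concerns closed $3$-paths in $G$ that are \emph{not} face boundaries, and it enters only in the boundary analysis: it rules out degenerations in which a cluster of interior vertices' disks shrinks to a point, forcing a pseudo-triangular loop of edges to carry angle sum $\ge\pi$. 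Your later sentence attributing $(i)$ and $(ii)$ to controlling boundary behavior is the correct picture; the earlier one about face nondegeneracy should be deleted.

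Beyond that, the sketch is sound but the crux is deferred. The closedness of $\sum_v K_v\,du_v$, the strict concavity of $F$, and the maximum-principle rigidity are indeed standard once Lemmas~\ref{L-2-1}--\ref{L-2-3} (or their Euclidean analogues) are in hand. What you correctly identify as ``the principal obstacle''---showing that no critical sequence escapes to $\partial\Omega$, by translating $(i)$ and $(ii)$ into the two degeneration modes (a single radius to $\infty$, a sub-complex of radii to $0$) and excluding each---is precisely the content of an argument like Lemma~\ref{L-3-1}, which occupies most of the actual work. A full write-up would also have to treat the genus-$1$ case separately, since there the functional has a scaling invariance and Gauss--Bonnet forces $\sum_v K_v=0$ identically, so ``unique up to conformal equivalence'' absorbs a one-parameter family of homotheties rather than a single metric.
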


There are many results relating to Circle Pattern Theorem. See, for example, the works of Andreev \cite{Andreev}, Marden-Rodin \cite{Marden-Rodin}, Colin de Verdi\`{e}re \cite{Colin}, Chow-Luo \cite{Chow-Luo} and others. Meanwhile, a natural problem arises: can we relax the requirement of non-obtuse angles in the above theorem? So far few progress has been made, except for some cases considered by Rivin \cite{Rivin1,Rivin2}, Bao-Bonahon \cite{Bao-Bonahon}, Bobenko-Springborn \cite{Bobenko-Springborn} and Schlenker \cite{Schlenker}, respectively. The purpose of this paper is to establish some general results.

First let us explain some of our terminologies. A closed (not necessarily simple) curve $\gamma$ in $S$ is called a pseudo-Jordan curve, if the complement $S\setminus \gamma$ contains a simply connected component whose boundary is equal to $\gamma$. For a pseudo-Jordan curve $\gamma$ in $S$, an enclosing vertex set of $\gamma$  consists of all vertices covered by
$\Omega$, where $\Omega$ is any simply connected component of $S\setminus\gamma$ such that $\partial\Omega=\gamma$. A pseudo-Jordan curve is said to be non-vacant if one of its enclosing vertex sets is non-empty.

\begin{theorem}\label{T-1-2}
Let $\mathcal T$ be a triangulation of an oriented closed surface $S$ of genus $g>1$. Suppose that $\Theta: E \to [0,\pi)$ is a function satisfying the following conditions:
\begin{itemize}
\item[\emph{\textbf{(C1)}}]If the edges $e_1,e_2,e_3$ form the boundary of a triangle of $\mathcal T$, and if $
 \sum_{i=1}^3 \Theta(e_i)>\pi$, then
$\Theta(e_1)+\Theta(e_2)<\pi+\Theta(e_3),$ $\Theta(e_2)+\Theta(e_3)<\pi+\Theta(e_1),$ $\Theta(e_3)+\Theta(e_1)<\pi+\Theta(e_2)$.
\item[\emph{\textbf{(C2)}}]If the edges $e_1,e_2,\cdots,e_s$ form a non-vacant pseudo-Jordan curve in $S$, then
 $\sum_{i=1}^s\Theta(e_i)<(s-2)\pi$.
\end{itemize}
Then there exists a hyperbolic metric $\mu$ on $S$ such that  $(S,\mu)$  supports a $\mathcal T$-type  circle pattern $\mathcal P$ with the exterior intersection angles  given by $\Theta$.
\end{theorem}

\begin{remark}\label{R-1-3}
Under the assumption that every exterior intersection angle is non-obtuse, when $s>4$, it is easy to see $\sum_{i=1}^s\Theta(e_i)\leq s\pi/2<(s-2)\pi$. Therefore, Thurston's conditions imply \textbf{(C2)}.
\end{remark}

\begin{remark}\label{R-1-4}
The contact graph of a circle pattern is a graph which has a vertex for each circle and an edge between two vertices for each intersection component of the corresponding closed disks. For a $\mathcal T$-type circle pattern $\mathcal P$ with acute exterior intersection angles, the contact graph $G(\mathcal P)$ is  isomorphic to the $1$-skeleton of $\mathcal T$. However in obtuse angle cases the statement may not hold. There is a discussion on  relations of $G(\mathcal P)$ and $\mathcal T$ in an early arXiv version of this paper. To relieve the burden of involved details, we do not include it here.
\end{remark}

\begin{theorem}\label{T-1-5}
The  pair $(\mu, \mathcal P)$ in Theorem \ref{T-1-2} is unique up to isometry if \emph{\textbf{(C1)}} is replaced by the following condition:
\begin{itemize}
\item[\emph{\textbf{(R1)}}]If the edges $e_1,e_2,e_3$ form the boundary of a triangle of $\mathcal T$, then $I(e_1)+I(e_2)I(e_3)\geq0,$  $I(e_2)+I(e_3)I(e_1) \geq 0,$ $I(e_3)+I(e_1)I(e_2)\geq 0$, where $I(e_i)=\cos \Theta(e_i)$ for $i=1,2,3$.
\end{itemize}
\end{theorem}

\begin{remark}\label{R-1-6}
The conditions \textbf{(C1)}, \textbf{(R1)} are motivated by spherical trigonometry. To be specific, \textbf{(C1)} is satisfied if and only if $\sum_{i=1}^3 \Theta(e_i)\leq \pi$ or $\Theta(e_1), \Theta(e_2), \Theta(e_3)$ are the three angles of a spherical triangle, and \textbf{(R1)} is satisfied if and only if $\sum_{i=1}^3 \Theta(e_i)\leq \pi$ or each side of the corresponding spherical triangle has length less than or equal to $\pi/2$. In this way one finds that \textbf{(C1)} is strictly wider than \textbf{(R1)}. See Proposition \ref{P-2-7} for  details.
\end{remark}

As a consequence of Theorem \ref{T-1-2} and Theorem \ref{T-1-5}, we obtain the following result which is related to the Hyperideal Circle Pattern Theorem due to Schlenker \cite{Schlenker}.
\begin{theorem}\label{T-1-7}
 Let $\mathcal T$ be  a triangulation of an oriented closed surface $S$ of genus $g>1$. Suppose that $\Theta:E\to [0,\pi)$ is a function satisfying
 \[
\sum\nolimits_{i=1}^s \Theta(e_i) \, <\, (s-2)\pi,
 \]
 whenever $e_1,e_2,\cdots,e_s$  form a pseudo-Jordan curve in $S$. Then there exists a hyperbolic metric $\mu$ on $S$ such that  $(S,\mu)$  supports a $\mathcal T$-type circle pattern $\mathcal P$ with the exterior intersection angles given by $\Theta$. Moreover, the pair $(\mu,\mathcal P)$ is unique up to isometry.
\end{theorem}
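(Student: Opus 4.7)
The plan is to deduce Theorem \ref{T-1-7} from Theorems \ref{T-1-3}, \ref{T-1-4}, and \ref{T-1-6}: I will check that the single uniform hypothesis $\sum_{l=1}^s\Theta(e_l)<(s-2)\pi$ for every pseudo-Jordan path already implies each of the four conditions \textbf{(C1)}, \textbf{(C2)}, \textbf{(C3)}, and \textbf{(R1)}. Granted this, Theorem \ref{T-1-3} yields a hyperbolic metric $\mu$ and a $G$-type circle pattern $\mathcal{P}$ with the prescribed exterior intersection angles, Theorem \ref{T-1-4} promotes $G$ to be the primitive contact graph of $\mathcal{P}$, and Theorem \ref{T-1-6} gives uniqueness up to isometry.

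Three of the four implications are essentially formal. The boundary of a single triangle of $\tau$ is a pseudo-Jordan path with $s=3$, so the hypothesis forces $\sum_{l=1}^3\Theta(e_l)<\pi$; this makes the premise of \textbf{(C1)} unsatisfiable, and hence \textbf{(C1)} holds vacuously. Condition \textbf{(C2)} is just the hypothesis restricted to those pseudo-Jordan paths that enclose at least one vertex. Condition \textbf{(C3)} follows from applying the hypothesis with $s=4$ to the boundary of the union of two adjacent triangles, which bounds a simply-connected quadrilateral in $S$.

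The one step requiring a small computation is \textbf{(R1)}. From $\Theta(e_1)+\Theta(e_2)+\Theta(e_3)<\pi$ one obtains $\Theta(e_1)<\pi-\Theta(e_2)-\Theta(e_3)$, with both sides lying in $[0,\pi)$; strict monotonicity of cosine on $[0,\pi]$ and the addition formula then give
\[
\cos\Theta(e_1) \,>\, -\cos\bigl(\Theta(e_2)+\Theta(e_3)\bigr) \,=\, -\cos\Theta(e_2)\cos\Theta(e_3)+\sin\Theta(e_2)\sin\Theta(e_3),
\]
so $I(e_1)+I(e_2)I(e_3) > \sin\Theta(e_2)\sin\Theta(e_3)\geq 0$, since $\Theta(e_l)\in[0,\pi)$ forces all the sines to be nonnegative; the other two inequalities in \textbf{(R1)} follow by symmetry. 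I do not anticipate any genuine obstacle here: the entire content of the argument is the mild observation that the strong pseudo-Jordan bound automatically entails the triangle-wise restriction needed for global rigidity, after which the three preceding theorems assemble to produce the existence, the primitive-contact-graph realization, and the uniqueness in one stroke.
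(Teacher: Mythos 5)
Your proposal is correct and follows essentially the same route as the paper: derive (C1), (C2), (C3) and (R1) from the pseudo-Jordan hypothesis and invoke Theorems \ref{T-1-3}, \ref{T-1-4}, \ref{T-1-6}. Your argument for \textbf{(R1)} (monotonicity of cosine plus the addition formula, yielding $I(e_1)+I(e_2)I(e_3)>\sin\Theta(e_2)\sin\Theta(e_3)\geq 0$) is a slightly cleaner equivalent of the sum-to-product estimate the paper uses, and you are a bit more careful than the paper in noting that the primitive-contact-graph claim requires Theorem \ref{T-1-4}, not merely Theorem \ref{T-1-3}.
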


One may ask the following question: what can be said regarding rigidity  when \textbf{(R1)} is not satisfied? Let $W$ denote the set of functions $\Theta: E\to [0,\pi)$ satisfying \textbf{(C1)}, \textbf{(C2)}. Then $W$ is a convex subset of $[0,\pi)^{|E|}$. Below is a part answer.
\begin{theorem}\label{T-1-8}
 For almost every $\Theta\in W$, there are at most finitely many $\mathcal T$-type circle pattern pairs $(\mu,\mathcal P)$, up to isometry, with the exterior intersection angles given by $\Theta$.
\end{theorem}

The paper is organized as follows: In next section we introduce some properties of three-circle configurations. In Section~\ref{S-3} we prove Theorem~\ref{T-1-2} by using topological degree theory. In Section~\ref{S-4}, applying variational principle, we derive Theorem~\ref{T-1-5}. As a corollary, Theorem \ref{T-1-7} is established. In Section \ref{S-5} we deduce Theorem \ref{T-1-8} through a combination of Teichm\"{u}ller theory and Sard's Theorem. The last section contains an appendix concerning some results from manifold theory.

Throughout this paper, we denote by $|\cdot|$ the cardinality of a set, and denote by $\chi(\cdot)$ the Euler characteristic of a manifold.

\section{Preliminaries}\label{S-2}

\subsection{Three-circle configurations}
The following three lemmas played crucial roles in the proof of Circle Patten Theorem. Please refer to \cite{Thurston,Chow-Luo} for more information.

\begin{lemma}\label{L-2-1}
For any three positive numbers $r_i, r_j, r_k$ and  three non-obtuse angles $\Theta_i, \Theta_j, \Theta_k$,
 there exists a configuration of three mutually intersecting circles in hyperbolic geometry, unique up to isometry, having radii $r_i, r_j, r_k$  and meeting in exterior intersection angles $\Theta_i, \Theta_j, \Theta_k$.
\end{lemma}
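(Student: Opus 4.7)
The plan is to recast the lemma as the existence and uniqueness of a hyperbolic triangle whose vertices are the three disk centers. Write $\Theta_{ij}$ for the exterior intersection angle between $D_i$ and $D_j$. If $P\in C_i\cap C_j$, then the triangle $O_iPO_j$ has sides $r_i$, $r_j$, $d_{ij}:=\dist(O_i,O_j)$, and its interior angle at $P$ coincides with the angle between the outward unit normals to $C_i$ and $C_j$ at $P$; by the definition of the exterior intersection angle, this is $\pi-\Theta_{ij}$. The hyperbolic law of cosines then gives
\[
\cosh d_{ij}=\cosh r_i\cosh r_j+\sinh r_i\sinh r_j\cos\Theta_{ij},
\]
and analogously for $d_{jk},d_{ki}$, so the three center-to-center distances are uniquely determined by the data. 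Both existence and uniqueness thus reduce to producing a hyperbolic triangle with sides $d_{ij},d_{jk},d_{ki}$ in which every pair of circles meets; the triangle is then unique up to isometry. Non-obtuseness forces $\cos\Theta_{ij}\geq 0$, whence $\cosh r_i\cosh r_j\leq\cosh d_{ij}\leq\cosh(r_i+r_j)$, i.e.\ $|r_i-r_j|<d_{ij}\leq r_i+r_j$, which is precisely the condition for pairwise intersection (transverse when $\Theta_{ij}>0$, tangential when $\Theta_{ij}=0$).

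The heart of the argument is the strict triangle inequality $d_{ij}+d_{jk}>d_{ki}$. Since $d_{ki}\leq r_i+r_k$ from the upper bound above, it suffices to show $d_{ij}+d_{jk}>r_i+r_k$. From $\cosh d_{ij}\geq\cosh r_i\cosh r_j$ and $\cosh d_{jk}\geq\cosh r_j\cosh r_k$, the addition formula yields
\[
\cosh(d_{ij}+d_{jk})\geq\cosh^2 r_j\,\cosh r_i\cosh r_k+\sqrt{(\cosh^2 r_i\cosh^2 r_j-1)(\cosh^2 r_j\cosh^2 r_k-1)}.
\]
The identity $\cosh^2 a\cosh^2 b-1=\sinh^2 a+\cosh^2 a\sinh^2 b$ bounds the two factors under the square root below by $\sinh^2 r_i$ and $\sinh^2 r_k$ respectively, so the square root is at least $\sinh r_i\sinh r_k$. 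Subtracting $\cosh(r_i+r_k)=\cosh r_i\cosh r_k+\sinh r_i\sinh r_k$ leaves the strictly positive residual $\sinh^2 r_j\cosh r_i\cosh r_k>0$ (here $r_j>0$ is essential). Hence $\cosh(d_{ij}+d_{jk})>\cosh(r_i+r_k)\geq\cosh d_{ki}$, giving the claim; the other two inequalities follow by symmetry.

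Given the triangle inequality, form the unique hyperbolic triangle $O_iO_jO_k$ with sides $d_{ij},d_{jk},d_{ki}$, place disks of the prescribed radii at its vertices, and recover the exterior intersection angles by running the law-of-cosines computation in reverse. The principal obstacle is the triangle inequality, which hinges on non-obtuseness: were any $\cos\Theta_{ij}$ negative, the key lower bound $\cosh d_{ij}\geq\cosh r_i\cosh r_j$ would fail and the chaining argument above would collapse.
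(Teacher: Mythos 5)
Your proof is correct, and it is an honest, self-contained argument; the paper in fact does not prove Lemma~\ref{L-2-1} at all, simply citing \cite{Thurston,Luo1}. Structurally your approach is the same one the paper uses for the harder Lemma~\ref{L-2-4} (which subsumes Lemma~\ref{L-2-1}): compute the three center-to-center distances via the hyperbolic law of cosines and reduce existence and uniqueness to the strict triangle inequality for those lengths. Where you genuinely diverge is in how the triangle inequality is established. The paper's Lemma~\ref{L-2-4} handles possibly obtuse angles and therefore has to verify $(\cosh l_i\cosh l_j-\cosh l_k)^2<\sinh^2 l_i\sinh^2 l_j$ by expanding a six-variable quadratic form and invoking a spherical-triangle identity to control the mixed terms. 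You instead exploit non-obtuseness directly: $\cos\Theta_{ij}\geq 0$ gives the clean lower bound $\cosh d_{ij}\geq\cosh r_i\cosh r_j$, from which the addition formula and $\cosh^2a\cosh^2b-1\geq\sinh^2a$ yield $\cosh(d_{ij}+d_{jk})-\cosh(r_i+r_k)\geq\cosh r_i\cosh r_k\sinh^2 r_j>0$. This chain is strictly easier than the paper's quadratic-form computation, but it is also tied to the sign of $\cos\Theta$; as you note at the end, the lower bound on $\cosh d_{ij}$ fails the moment any angle becomes obtuse, which is precisely why the paper needs the heavier machinery of Lemma~\ref{L-2-4} for the generalization. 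Both proofs thus share the same skeleton; yours buys brevity at the cost of generality, which is entirely appropriate for Lemma~\ref{L-2-1} as stated.

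Two small presentational remarks. First, when you say the interior angle at $P$ equals the angle between the outward normals, it is worth flagging that the outward normals are $-\vec{PO_i}$ and $-\vec{PO_j}$, and negating both vectors preserves the angle between them; the equality is then immediate and the law of cosines gives the stated $+\cos\Theta_{ij}$ sign. Second, you should add one sentence in the existence step observing that the inequalities $|r_i-r_j|<d_{ij}\leq r_i+r_j$ (already proved) guarantee that the disks placed at the vertices of the reconstructed triangle do in fact pairwise intersect, so that the inversion of the law-of-cosines formula is legitimate. Neither is a gap, merely a spot where the reader would welcome a word.
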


\begin{figure}[htbp]\centering
\includegraphics[width=0.50\textwidth]{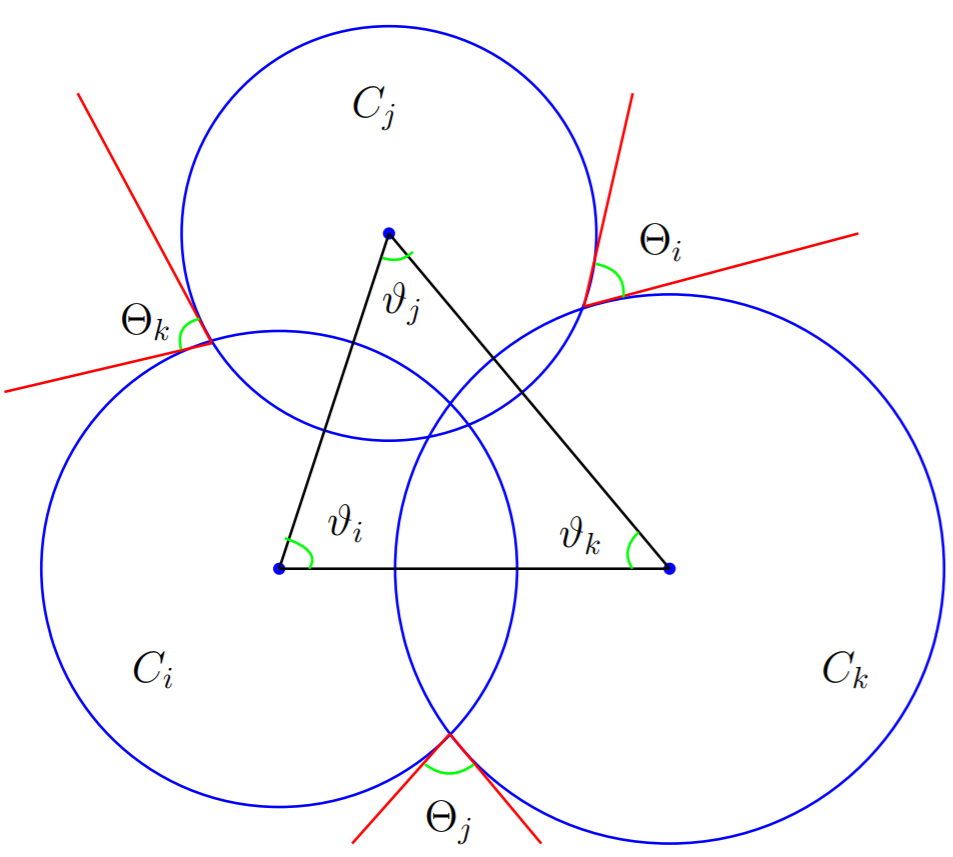}
\caption{A three-circle configuration}
\end{figure}

As in Figure 1, let $\vartheta_i,\vartheta_j,\vartheta_k$ denote the corresponding inner angles of the triangle of centers.

\begin{lemma}\label{L-2-2}
Under the above condition, we have
\[
\frac{\partial\vartheta_i}{\partial r_i}\ <\ 0,\;\; \frac{\partial\vartheta_j}{\partial r_i}\ \geq \ 0,\,\;\; \frac{\partial(\vartheta_i+\vartheta_j+\vartheta_k)}{\partial r_i}\ <\ 0.
\]
\end{lemma}

\begin{lemma}\label{L-2-3}
Let $\Theta_i,\Theta_j,\Theta_k$ be as above. Then
\begin{equation}\label{Eq-1}
\displaystyle{\lim_{r_i\to+\infty}\vartheta_i\,=\,0,}
\end{equation}
\begin{equation}\label{Eq-2}
\displaystyle{\lim_{(r_i,r_j,r_k)\to(0,a,b)}\vartheta_i\,=\,\pi-\Theta_i,}
\end{equation}
\begin{equation}\label{Eq-3}
\displaystyle{\lim_{(r_i,r_j,r_k)\to(0,0,c)}\vartheta_i+\vartheta_j\,=\,\pi,}
\end{equation}
\begin{equation}\label{Eq-4}
\displaystyle{\lim_{(r_i,r_j,r_k)\to(0,0,0)}\vartheta_i+\vartheta_j+\vartheta_k\,=\,\pi,}
\end{equation}
where $a,b,c$ are positive constants.
\end{lemma}

\subsection{Some new observations}
In search of generalizing Circle Pattern Theorem, one needs to go a step further to study three-circle configurations with obtuse exterior intersection angles. Below are some new observations.
\begin{lemma}\label{L-2-4}
Suppose $\Theta_i, \Theta_j, \Theta_k \in [0,\pi)$ satisfy
\[
\Theta_i+\Theta_j+\Theta_k\ \leq \ \pi
\]
or
\[
\Theta_i+\Theta_j\ <\,\pi+\Theta_k,\;\;\Theta_j+\Theta_k\ <\ \pi+\Theta_i,\;\;\Theta_k+\Theta_i\ <\ \pi+\Theta_j.
\]
For any three positive numbers $r_i,r_j,r_k$, there exists a configuration of three mutually intersecting circles in hyperbolic geometry, unique up to isometry, having radii $r_i,r_j,r_k$  and meeting in exterior intersection angles
$\Theta_i,\Theta_j,\Theta_k$.
\end{lemma}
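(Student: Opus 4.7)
The plan is to realize $O_i, O_j, O_k \in \mathbb{H}^2$ as three points with prescribed pairwise distances, which then determine the disks uniquely up to isometry. The hyperbolic cosine law forces
\[
\cosh d_\alpha \;=\; \cosh r_\beta \cosh r_\gamma + \sinh r_\beta \sinh r_\gamma \cos \Theta_\alpha, \qquad \{\alpha,\beta,\gamma\}=\{i,j,k\}.
\]
Since each $\Theta_\alpha \in [0,\pi)$ gives $\cos \Theta_\alpha > -1$, one has $\cosh d_\alpha > \cosh(|r_\beta - r_\gamma|)$, so $d_\alpha > |r_\beta - r_\gamma|$; this already guarantees that each pair of disks intersects transversally at the prescribed exterior angle with neither disk containing the other. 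Via the Gram determinant of the timelike unit vectors $O_\alpha$ in Minkowski $\mathbb{R}^{2,1}$, the realizability of $d_i,d_j,d_k$ as pairwise distances of three points in $\mathbb{H}^2$ is equivalent to the non-negativity of
\[
F \;:=\; 1 - \cosh^2 d_i - \cosh^2 d_j - \cosh^2 d_k + 2 \cosh d_i \cosh d_j \cosh d_k,
\]
with $F=0$ characterizing the degenerate collinear case. Uniqueness up to isometry is automatic once existence is settled, so the entire problem reduces to proving $F \ge 0$ under (a) or (b).

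To prove this, I would use a continuity argument along the straight-line deformation $\Theta(t) = t\Theta$ for $t \in [0,1]$. Both conditions are linear inequalities that scale favorably: $t\sum\Theta_\alpha \le t\pi \le \pi$ gives (a) throughout, while the original strict inequality $\Theta_i + \Theta_j < \pi + \Theta_k$ scales to $t\Theta_i + t\Theta_j < t\pi + t\Theta_k \le \pi + t\Theta_k$, preserving (b). At $t=0$ the three disks are pairwise tangent, the centers form a non-collinear triangle with sides $r_\beta + r_\gamma$, and one checks directly that $F(0)>0$. If $F(1)<0$, continuity supplies $t^* \in (0,1]$ with $F(t^*)=0$, which (after suitable relabeling) means a collinearity equation $d_\alpha + d_\beta = d_\gamma$ holds at $t^*$. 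This equation, combined with the cosine-law formulas, becomes a polynomial identity in $\sinh r_\alpha$, $\cosh r_\alpha$, $\cos(t^*\Theta_\alpha)$. The half-angle substitution $p_\alpha = \sinh(r_\alpha/2)$, $q_\alpha = \cosh(r_\alpha/2)$ is especially convenient because it yields the Euclidean-style identity
\[
\sinh^2(d_\alpha/2) \;=\; (p_j q_k)^2 + (q_j p_k)^2 + 2(p_j q_k)(q_j p_k)\cos \Theta_\alpha,
\]
which recasts the degeneracy constraint as a clean trigonometric condition on the $\Theta_\alpha$'s alone.

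The hard part is then the final algebraic step: showing that this trigonometric constraint is incompatible with each of (a) and (b). Geometrically, $\sum \Theta_\alpha = \pi$ (the boundary of (a)) corresponds to three circles meeting at a common point, a configuration whose three centers lie on three distinct rays issuing from that common point and therefore manifestly form a genuine triangle, ruling out collinearity; for $\sum\Theta_\alpha < \pi$ the situation is strictly more generic, and for (b) the auxiliary angles $\pi - \Theta_\alpha$ satisfy a strict triangle inequality that, after translation through the half-angle identity above, precludes the three-center collinearity. I expect the bookkeeping in these contradictions to be the main technical burden of the proof; once carried through, $F(t)>0$ holds for every $t \in [0,1]$, giving existence at $t=1$ and thus the lemma.
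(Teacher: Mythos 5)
Your setup is a valid alternative framing: the Gram-determinant function $F$ that you write down is precisely what the paper's display (3) expands to, and $F>0$ is equivalent to the triangle inequalities on $l_i,l_j,l_k$. The observation that $\Theta_\alpha\in[0,\pi)$ forces $|r_\beta-r_\gamma|<d_\alpha\le r_\beta+r_\gamma$ is also correct and worth making explicit. But after this point the proposal has a genuine gap, and the paper's proof goes a different and more concrete route.

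The gap is in the continuity argument. Fixing the radii and deforming $\Theta(t)=t\Theta$ correctly keeps conditions (a) and (b) satisfied, and $F(0)>0$ is immediate (tangent disks, side lengths $r_\beta+r_\gamma$). But to conclude you must show that $F(t^\ast)=0$ is impossible whenever $\Theta(t^\ast)$ satisfies (a) or (b) and $r_i,r_j,r_k>0$ are arbitrary, and that is exactly the content of the lemma at the parameter $t^\ast$. The deformation therefore does not reduce the difficulty unless you extract additional structure from the ``first zero'' (e.g.\ $F'(t^\ast)\le 0$), which you do not use. Moreover, the proposed route to a contradiction is flawed in two places. First, the degeneracy $F=0$ is \emph{not} a condition on the $\Theta_\alpha$'s alone: after the half-angle substitution it still involves $p_\alpha,q_\alpha$, i.e.\ the radii. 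Second, the geometric picture you invoke for the boundary of case (a) is wrong: $F=0$ means the three \emph{centers} are collinear, which is a different degeneration from the three \emph{circles} passing through a common point, and $\sum\Theta_\alpha=\pi$ at fixed radii does not make the circles concurrent. The handling of case (b) via ``auxiliary angles $\pi-\Theta_\alpha$'' is left entirely unspecified.

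For comparison, the paper proves $F>0$ directly. It rewrites $F>0$ as an explicit polynomial inequality in $a_\eta=\cosh r_\eta$, $x_\eta=\sinh r_\eta$, $I_\eta=\cos\Theta_\eta$ with coefficients $\gamma_{ijk}=I_i+I_jI_k$. Under (a) each $\gamma_{ijk}\ge 0$, so every term is non-negative and one term is strictly positive. Under (b) with $\sum\Theta_\alpha>\pi$, there is a spherical triangle with inner angles $\Theta_i,\Theta_j,\Theta_k$; the paper uses its side lengths $\phi_\alpha$ and the second spherical cosine law to complete the square, reducing the inequality to $\xi_{ijk}=\sin^2\phi_k\sin^2\Theta_i\sin^2\Theta_j>0$. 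The spherical-triangle interpretation is the key idea that replaces the speculative step in your sketch, and it is what actually makes the obtuse case go through.
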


\begin{proof}
Set
\[
l_i\ =\ \cosh^{-1}(\cosh r_j\cosh r_k+\cos\Theta_i\sinh r_j\sinh r_k)
\]
and $l_j, l_k$ similarly. It suffices to check that $l_i,l_j,l_k$ satisfy the triangle inequalities. Namely,
\[
\cosh(l_i+l_j)\ >\ \cosh l_k
\]
and
\[
\cosh(l_i-l_j) \,<\, \cosh l_k.
\]
Equivalently, one needs to show
\begin{equation}\label{Eq-5}
\big(\cosh l_i\cosh l_j-\cosh l_k\big)^2 \,<\, \sinh^2 l_i\sinh^2 l_j\,=\,(\cosh^2 l_i-1)(\cosh^2 l_j-1).
\end{equation}
To simplify notations, for $m=i,j,k$, set
\[
a_m\ =\ \cosh r_m,\;\;x_m\ =\ \sinh r_m.
\]
Then
\begin{equation}\label{Eq-6}
\cosh l_i\,=\,a_ja_k+\cos\Theta_ix_jx_k.
\end{equation}
Substituting (\ref{Eq-6}) into (\ref{Eq-5}), we need to prove
\begin{equation}\label{Eq-7}
\begin{split}
&\sin^2\Theta_ix_j^2x^2_k+\sin^2\Theta_jx^2_k x_i^2+\sin^2\Theta_kx^2_ix^2_j
+(2+2\cos\Theta_i\cos\Theta_j\cos\Theta_k)x_i^2x_j^2x_k^2\\
&\quad+2\lambda_{ijk}a_ja_kx_jx_kx_i^2+2\lambda_{jki}a_ka_ix_kx_ix_j^2+2\lambda_{kij}a_ia_jx_ix_jx_k^2\,>\,0,
\end{split}
\end{equation}
where
\[
\lambda_{ijk}\ =\ \cos\Theta_i+\cos\Theta_j\cos\Theta_k.
\]
Now we divide the proof into the following two cases:
\begin{itemize}
\item[$(\mathrm{I})$] $\Theta_i+\Theta_j+\Theta_k\leq \pi$. Then
\begin{equation}\label{Eq-8}
\begin{aligned}
\lambda_{ijk}\,&=\,\cos\Theta_i+\cos(\Theta_j+\Theta_k)+\sin\Theta_j\sin\Theta_k\\
&\geq\,2\cos\frac{\Theta_i+\Theta_j+\Theta_k}{2}\cos\frac{\Theta_i-\Theta_j-\Theta_k}{2}\\
&\geq\,0.
\end{aligned}
\end{equation}
Similarly,
\[
\lambda_{jki}\,\geq\, 0,\;\;\lambda_{kij}\,\geq\, 0.
\]
Meanwhile,
\[
2+2\cos\Theta_i\cos\Theta_j\cos\Theta_k\,>\,0.
\]
Thus we deduce  (\ref{Eq-7}).

\item[$(\mathrm{II})$] $\Theta_i+\Theta_j+\Theta_k >\pi$ and
    $\Theta_i+\Theta_j<\pi+\Theta_k,$ $\Theta_j+\Theta_k<\pi+\Theta_i,$ $\Theta_k+\Theta_i<\pi+\Theta_j$. Then there exists
 a spherical triangle with angles $\Theta_i,\Theta_j,\Theta_k$. Denote by $\phi_i,\phi_j,\phi_k$ the lengths of sides opposite to $\Theta_i,\Theta_j,\Theta_k$, respectively. By the second cosine law of spherical triangles,
\[
\cos \phi_k\ =\ \frac{\cos\Theta_k+\cos\Theta_i\cos\Theta_j}{\sin\Theta_i\sin\Theta_j}.
\]
Therefore,
\begin{equation}\label{Eq-9}
\lambda_{kij}\,=\,\cos\Theta_k+\cos\Theta_i\cos\Theta_j\,=\, \cos\phi_k\sin\Theta_i\sin\Theta_j.
\end{equation}
Note that  (\ref{Eq-7}) is equivalent to
\begin{equation}\label{Eq-10}
\begin{split}
&\sin^2\Theta_ia_i^2x_j^2x^2_k+\sin^2\Theta_ja_j^2x^2_k x_i^2+\sin^2\Theta_ka_k^2x^2_ix^2_j
-\zeta_{ijk}x_i^2x_j^2x_k^2\\
&\quad+2\lambda_{ijk}a_ja_kx_jx_kx_i^2+2\lambda_{jki}a_ka_ix_kx_ix_j^2+2\lambda_{kij}a_ia_jx_ix_jx_k^2\;>\;0,
\end{split}
\end{equation}
where
\[
\begin{aligned}
\zeta_{ijk}\,&=\,\sin^2\Theta_i+\sin^2\Theta_j+\sin^2\Theta_k-(2+2\cos\Theta_i\cos\Theta_j\cos\Theta_k)\\
         \,&=\,\sin^2\Theta_i\sin^2\Theta_j-(\cos\Theta_k+\cos\Theta_i\cos\Theta_j)^2\\
         \,&=\,\sin^2\Theta_i\sin^2\Theta_j-\cos^2\phi_k\sin^2\Theta_i\sin^2\Theta_j\\
         \,&=\,\sin^2\phi_k\sin^2\Theta_i\sin^2\Theta_j.\\
\end{aligned}
\]
Set $y_{i}=\sin\Theta_ia_ix_jx_k,$ $y_{j}=\sin\Theta_ja_jx_kx_i$ and  $y_{k}=\sin\Theta_ka_kx_ix_j$. Substituting (\ref{Eq-9}) into (\ref{Eq-10}), it remains to check
\[
y_{i}^2+y_{j}^2+y_{k}^2+2\cos \phi_iy_{j}y_{k}+2\cos \phi_j y_{k}y_{i}+2\cos \phi_ky_{i}y_{j}\,>\,\zeta_{ijk}x_i^2x_j^2x_k^2.
\]
Completing the square gives
\[
\begin{aligned}
&\, y_{i}^2+y_{j}^2+y_{k}^2+2\cos \phi_iy_{j}y_{k}+
2\cos \phi_j y_{k}y_{i}+2\cos \phi_ky_{i}y_{j}\\
=\,&\,(y_{i}+\cos \phi_jy_{k}+\cos\phi_ky_{j})^2
+\sin^2\phi_jy^2_{k}+\sin^2\phi_ky^2_{j}
+2(\cos\phi_i-\cos\phi_j\cos\phi_k)y_{j}y_{k}\\
\geq\, &\, \sin^2\phi_jy^2_{k}+\sin^2\phi_ky^2_{j}+
2(\cos\phi_i-\cos\phi_j\cos\phi_k)y_{j}y_{k}.
\end{aligned}
\]
By the cosine law of spherical triangles, one obtains
\[
\cos\phi_i-\cos\phi_j\cos\phi_k\,=\,\cos\Theta_i\sin\phi_j\sin\phi_k.
\]
It follows that
\[
\begin{aligned}
&\ y_{i}^2+y_{j}^2+y_{k}^2+2\cos \phi_iy_{j}y_{k}+2\cos \phi_j y_{k}y_{i}+2\cos \phi_ky_{i}y_{j}\\
\geq &\,\sin^2\phi_jy^2_{k}+\sin^2\phi_ky^2_{j}+
2\cos\Theta_i\sin\phi_j\sin\phi_ky_{j}y_{k}\\
=&\,(\sin\phi_jy_{k}+\cos\Theta_i\sin\phi_ky_{j})^2
+\sin^2\Theta_i\sin^2\phi_ky^2_{j}\\
\geq &\,\sin^2\Theta_i\sin^2\phi_k \sin^2\Theta_ja_j^2x_k^2x_i^2\\
> &\,\sin^2\phi_k\sin^2\Theta_i\sin^2\Theta_jx_i^2x_j^2x_k^2\\
= &\,\zeta_{ijk}x_i^2x_j^2x_k^2.
\end{aligned}
\]
\end{itemize}
Thus the lemma is proved.
\end{proof}

\begin{remark}\label{R-2-5}
On the other hand, if the triangle inequalities hold for all triples of positive numbers $r_i,r_j,r_k$, then the condition of Lemma \ref{L-2-4} must be satisfied. Namely, the above lemma is in the optimal form.
\end{remark}

\begin{remark}\label{R-2-6}
If $\Theta_i,\Theta_j,\Theta_k\in[0,\pi)$ satisfy $\lambda_{ijk}\geq 0, \lambda_{jki}\geq 0, \lambda_{kij}\geq0$, the above proof can be simplified because (\ref{Eq-7}) automatically holds. In fact the following Proposition \ref{P-2-7} manifests that this condition is stronger than the condition of Lemma \ref{L-2-4}.
\end{remark}

\begin{proposition}\label{P-2-7}
Given $\Theta_i,\Theta_j,\Theta_k\in[0,\pi)$, we have
$\lambda_{ijk}\geq0, \lambda_{jki}\geq0, \lambda_{kij}\geq0$
if and only if one of the following properties holds:
\begin{itemize}
\item[$(i)$] $\Theta_i+\Theta_j+\Theta_k\leq\pi$;
\item[$(ii)$]  $\Theta_i, \Theta_j, \Theta_k$ are the angles of a spherical triangle with each side less than or equal to $\pi/2$.
\end{itemize}
\end{proposition}

\begin{proof}
The "if" part is an immediate consequence of  (\ref{Eq-8}) and (\ref{Eq-9}).

To show the "only if" part, assume $\Theta_i+\Theta_j+\Theta_k>\pi$. It suffices to verify property $(ii)$. First a routine calculation gives
\[
\begin{aligned}
0\,\leq\, \lambda_{ijk}\,
=&\,\cos\Theta_i+\cos\Theta_j\cos\Theta_k\\
=&\,\cos\Theta_i+\cos(\Theta_j-\Theta_k)-\sin\Theta_j\sin\Theta_k\\
=&\,2\cos\frac{\Theta_i+\Theta_j-\Theta_k}{2}\cos\frac{\Theta_i+\Theta_k-\Theta_j}{2}-\sin\Theta_j\sin\Theta_k.
\end{aligned}
\]

If $\Theta_j=0$, one shows $\Theta_i+\Theta_k\leq\pi$, which yields $\Theta_i+\Theta_j+\Theta_k\leq0$. This contradicts to the assumption that $\Theta_i+\Theta_j+\Theta_k>\pi$.

If $\Theta_k=0$, similar arguments lead to a contradiction.

Thus $\Theta_j,\Theta_k\in(0,\pi)$, which implies
\[
\cos\frac{\Theta_i+\Theta_j-\Theta_k}{2}\cos\frac{\Theta_i+\Theta_k-\Theta_j}{2}\,>\,0.
\]
As a result,
\[
\Theta_i+\Theta_j\,<\,\Theta_k+\pi,\;\; \Theta_i+\Theta_k\,<\,\Theta_j+\pi.
\]
Similarly,
\[
\Theta_j+\Theta_k\,<\,\Theta_i+\pi.
\]
Hence there exists a spherical triangle with  angles $\Theta_i,\Theta_j,\Theta_k$. Using (\ref{Eq-9}), it is easy to see each side of this spherical triangle has length less than or equal to $\pi/2$.
\end{proof}

\begin{lemma}\label{L-2-8}
Under the condition of Lemma \ref{L-2-4}, the formulas (\ref{Eq-1}), (\ref{Eq-2}), (\ref{Eq-3}), (\ref{Eq-4}) hold.
\end{lemma}

\begin{proof} The proof of (\ref{Eq-1}) is similar to the related results in Ge-Jiang \cite{Ge-Jiang} and Ge-Xu \cite{Ge-Xu}. Due to the cosine law of hyperbolic triangles, we have
\[
\begin{aligned}
\cos \vartheta_i\,=\, &\,\frac{\cosh l_{j}\cosh l_{k}-\cosh l_{i}}{\sinh l_{j}\sinh l_{k}}\\
=\,&\,\frac{\cosh(l_{j}+l_{k})+\cosh(l_{j}-l_{k})-2\cosh l_{i}}{\cosh(l_{j}+l_{k})-\cosh(l_{j}-l_{k})}\\
=\,&\,\frac{1+A-2B}{1-A},
\end{aligned}
\]
where
 \[
 A\,=\, \frac{\cosh (l_{j}-l_{k})}{\cosh(l_{j}+l_{k})},\;\;B\,=\,\frac{\cosh l_{i}}{\cosh(l_{j}+l_{k})}.
 \]
It suffices to show $A, B\to 0$ as $r_i\to +\infty$. For $m=i,j,k$, setting $c_m=\min\{\cos \Theta_m,0\}$, one obtains $0<1+ c_m\leq 1$. Moreover,
\[
\begin{aligned}
\cosh l_k\,=\,&\cosh r_{i}\cosh r_{j}+\cos\Theta_k\sinh r_{i}\sinh r_{j}\\
\geq\,&\,\cosh r_i\cosh r_j+c_k\sinh r_{i}\sinh r_{j}\\
\geq\,&\,\cosh r_i\cosh r_j+c_k\cosh r_{i}\cosh r_{j}\\
=\,&\,(1+c_k)\cosh r_i\cosh r_j\\
\geq\,&\,(1+c_k)\cosh r_i.
\end{aligned}
\]
Similarly,
\[
\cosh l_j\,\geq\,(1+c_j)\cosh r_k\cosh r_i\,\geq\, (1+c_j)\cosh r_i.
\]
As a result,
\[
0\,<\,A\,\leq\, \frac{1}{\min\{\cosh l_{j},\cosh l_{k}\}}
\,\leq\, \frac{1}{(1+c_k)(1+c_j)\cosh r_i}.
\]
Hence $A\to 0$ as $r_i\to +\infty$. Meanwhile,
\[
\cosh l_i \,=\, \cosh r_{j}\cosh r_{k}+\cos\Theta_i\sinh r_{j}\sinh r_{k}\,\leq\, 2\cosh r_j\cosh r_k.
\]
It follows that
\[
0\ <\ B\, \leq\, \frac{\cosh l_{i}}{\cosh l_{j}\cosh l_{k}}\,\leq\, \frac{2}{(1+c_k)(1+c_j)\cosh^2 r_i}.
\]
Then $B\to 0$ as $r_i\to +\infty$, which concludes  (\ref{Eq-1}).

The formula (\ref{Eq-2}) is derived from a direct computation.

Let us consider (\ref{Eq-3}). By the cosine law of hyperbolic triangles, we have
\[
 \begin{aligned}
0 \ <\  \cos \vartheta_i+\cos \vartheta_j\,=\,&\, \frac{\sinh (l_i+l_j)\big(\cosh l_{k}-\cosh(l_i- l_{j})\big)}{\sinh l_{i}\sinh l_j\sinh l_{k}}\\
\,\leq\,&\,\frac{\sinh (l_i+l_j)(\cosh l_{k}-1)}{\sinh l_{i}\sinh l_j\sinh l_{k}}\\
\,=\,&\,\frac{\sinh (l_i+l_j)\sinh(l_k/2)}{\sinh l_{i}\sinh l_j\cosh( l_{k}/2)}.
\end{aligned}
\]
As $(r_i,r_j,r_k)\to (0,0,c)$, one obtains
\[
l_i\,\to\, c,\;\; l_j\,\to\, c,\;\; l_k\,\to\, 0.
\]
Consequently,
\[
\cos \vartheta_i+\cos \vartheta_j \,\to\, 0,
\]
which yields
\[
\vartheta_i+\vartheta_j\, \to\, \pi.
\]

It remains to prove (\ref{Eq-4}). As $(r_i,r_j,r_k)\to (0,0,0)$, the area of the triangle tends to zero. Due to the Gauss-Bonnet formula, one obtains the desired result.
\end{proof}

\begin{remark}\label{R-2-9}
From the above proof, one finds that (\ref{Eq-1}) holds uniformly, no matter how the other two radii behave, even if either or both of them tend to infinity or zero. In addition, regarding $\vartheta_i$ as the function of $r_i, r_j, r_k,\Theta_i,\Theta_j,\Theta_k$, the formula is true as $(r_i,r_j, r_k,\Theta_i,\Theta_j,\Theta_k)$ varies in $\mathbb R_{+}^3\times \Lambda$, where $\Lambda\subset [0,\pi)^3$ is any compact set such that the condition of Lemma \ref{L-2-4} is satisfied.
\end{remark}

\section{Existence}\label{S-3}
To prove Theorem \ref{T-1-2}, a natural strategy is to follow Thurston \cite{Thurston}. Unfortunately, for three-circle configurations with obtuse exterior intersection angles, the result similar to Lemma \ref{L-2-2} may not hold (see Remark \ref{R-4-2}). Part of his method does not work. In order to overcome the difficulty, we will employ the topological degree theory. As a comparison, this approach has the advantage of being independent on rigidity.

\subsection{Thurston's construction} Recall that $S$ is an oriented closed surface of genus $g>1$ and $\mathcal T$ is a triangulation of $S$ with the sets of vertices, edges and triangles $V, E, F$. Let $r\in \mathbb R_{+}^{|V|}$ be a radius vector, which assigns each vertex $v\in V$ a positive number $r(v)$. Then $r$ together with the  function $\Theta: E\to [0,\pi)$ in Theorem \ref{T-1-2}  determines a hyperbolic cone metric structure on $S$ as follows.

For each triangle $\triangle(v_iv_jv_k)$ of $\mathcal T$, one associates it with the hyperbolic triangle determined by the centers of three mutually intersecting circles with  radii
\[
 r(v_i),\;r(v_j),\;r(v_k)
 \]
 and exterior intersection angles
 \[
 \Theta\big([v_i,v_j]\big),\;\Theta\big([v_j,v_k]\big),\;\Theta\big([v_k,v_i]\big).
 \]
 Because $\Theta$ satisfies \textbf{(C1)},  Lemma \ref{L-2-4} implies the above procedure works well.

Gluing these hyperbolic triangles produces a metric surface $\big(S,\mu(r)\big)$ which is locally hyperbolic with possible cone type singularities at the vertices. For each $v\in V$, the vertex curvature $k(v)$ is defined by
\[
k(v)\;=\;2\pi\ -\ \sigma(v),
\]
where $\sigma(v)$  denotes the cone angle at $v$. More precisely, $\sigma(v)$ is equal to the sum of all inner angles having vertex $v$.

Write $k(v)$ as $k(v)(\Theta, r)$. If there exists a radius vector $r_\ast$ such that $k(v)(\Theta, r_\ast)=0$ for all $v\in V$, then it produces a smooth hyperbolic metric on $S$. For every $v\in V$, on $\big(S,\mu(r_\ast)\big)$ drawing the circle centered at $v$ with radius $r_\ast(v)$, one then obtains the demanded circle pattern. Consider the following curvature map
\[
\begin{aligned}
Th(\Theta,\cdot):\quad\qquad &\,\mathbb{R}_{+}^{|V|} \quad
&\,\longrightarrow \,\;\qquad\qquad &\,\mathbb{R}^{|V|} \\
\big(r(v_1),&\,r(v_2),\cdots \big)&\longmapsto\;\quad\,\,\big(k(v_1), &\,k(v_2),\cdots\big).\\
  \end{aligned}
  \]
The major purpose of this section is to show that the origin $\mathrm{O}=(0,0,\cdots,0)\in \mathbb R^{|V|}$ belongs to the image of $Th(\Theta,\cdot)$.

\subsection{Topological degree} Let us make use of the topological degree theory. Specifically, one finds a relatively compact open set $\Lambda\subset \mathbb{R}_{+}^{|V|}$ and determines the degree $\deg(Th(\Theta,\cdot),\Lambda,\mathrm{O})$. Once showing
\[
\deg(Th(\Theta,\cdot),\Lambda,\mathrm{O})\, \neq\, 0,
\]
it follows from Theorem \ref{T-6-8} that $\mathrm{O}$ is in the image of $Th(\Theta,\cdot)$.

We shall compute $\deg(Th(\Theta,\cdot),\Lambda,\mathrm{O})$ via homotopy method. Namely, deform $Th(\Theta,\cdot)$ to another map which is relatively easier to manipulate. For each $t\in[0,1]$, note that the function $\Theta_t=t\Theta$ satisfies \textbf{(C1)}. Applying Thurston's construction, $Th_t(\cdot)=Th(\Theta_t,\cdot)$ is well-defined. Moreover, $Th_t$ forms a  homotopy from $Th(\Theta,\cdot)$ to $Th(0,\cdot)$.

\begin{lemma}\label{L-3-1}
 There exists a relatively compact open set $\Lambda\subset \mathbb{R}_{+}^{|V|}$ such that
 \[
 Th_t\big(\,\mathbb{R}_{+}^{|V|}\setminus\Lambda\,\big)\ \subset\  \mathbb{R}^{|V|}\setminus\{\mathrm{O}\},\;\;\forall\,t\in[0,1].
 \]
\end{lemma}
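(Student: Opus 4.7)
The plan is to take $\Lambda$ of the form $\{r\in\mathbb R_+^{|V|}:\epsilon<r(v)<R\text{ for all }v\in V\}$ for suitable $0<\epsilon<R$, and show that on its complement, for every $t\in[0,1]$, some coordinate of $Th_t(r)=(k(v)(\Theta(t),r))_{v\in V}$ stays bounded away from zero. Note that $\Theta(t)=t\Theta$ inherits both \textbf{(C1)} and \textbf{(C2)} from $\Theta$, so each $Th_t$ is well-defined by Lemma~\ref{L-2-4}; moreover $\{\Theta(t):t\in[0,1]\}$ is compact in $[0,\pi)^{|E|}$, which will yield uniformity throughout.

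For the upper bound, suppose $r(v_0)\to\infty$ for some vertex $v_0$. By (\ref{E-1}) together with Remark~\ref{R-2-6}, the inner angle at $v_0$ of every triangle of $\tau$ containing $v_0$ tends to $0$, uniformly in the remaining radii and in $t\in[0,1]$. Summing over triangles at $v_0$ gives $\sigma(v_0)\to 0$, so $k(v_0)\to 2\pi$. Hence there exists $R>0$ such that $r(v_0)>R$ forces $k(v_0)>\pi$, and in particular $Th_t(r)\ne O$.

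For the lower bound, set $V_0:=\{v\in V:r(v)\le\epsilon\}$ and suppose $V_0$ is nonempty while the remaining radii lie in $[\epsilon,R]$. Classifying the triangles of the star $U=\mathrm{star}(V_0)$ by the number of $V_0$-vertices they contain and applying the three limits in (\ref{E-2}), one obtains
\[
\lim_{\epsilon\to 0}\sum_{v\in V_0}k(v)\,=\,2\pi|V_0|-\pi|F_U|+\sum_{\Delta\in F_1}\Theta(e_\Delta),
\]
where $F_U$ denotes the triangles of $U$, $F_1\subset F_U$ are those with exactly one $V_0$-vertex, and $e_\Delta$ is the edge opposite the $V_0$-vertex of $\Delta\in F_1$. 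A combinatorial Euler-characteristic identity on the triangulated subsurface $U$ rewrites $2\pi|V_0|-\pi|F_U|$ as a sum involving $\chi(U)$ and the lengths $s_j$ of the boundary cycles $\gamma_j$ of $\partial U$; condition \textbf{(C2)}, applied to the pseudo-Jordan components of $\partial U$ that enclose $V_0$-vertices, then forces the right-hand side to be strictly negative. The extremal case $V_0=V$ reduces via the standard identity $2|V|-|F|=4(1-g)$ to $\sum_v k(v)\to 4\pi(1-g)<0$. In either situation $\sum_{v\in V_0}k(v)$ is uniformly bounded away from zero, so $Th_t(r)\ne O$.

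The main obstacle is the combinatorial-topological bookkeeping in the small-radius case: the subsurface $U$ need not be a disjoint union of disks, its boundary components may fail to be pseudo-Jordan (i.e.\ have no simply-connected complement in $S$), and edges with both endpoints in $V\setminus V_0$ lying interior to $U$ produce extra positive $\Theta$-terms in the formula above. These must be tamed by enlarging $V_0$ appropriately or by choosing a system of pseudo-Jordan representatives to which \textbf{(C2)} genuinely applies, so that the negative Euler contribution outweighs every positive $\Theta$-contribution. Once this is handled, the uniformity of all the limits over the compact family $\{\Theta(t):t\in[0,1]\}$ allows us to fix $\epsilon$ and $R$ independently of $t$, providing the required $\Lambda$.
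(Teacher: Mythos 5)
Your treatment of the upper-bound case ($r(v_0)\to\infty$ forces $k(v_0)\to 2\pi$ uniformly via Lemma~\ref{L-2-5} and Remark~\ref{R-2-6}) matches the paper's, and the Euler-characteristic computation you set up for the small-radius case is essentially the paper's computation in the sub-case where the subcomplex $CK(V_0)$ spanned by the degenerating vertices is simply connected (their $g_0=0$, $h_0=1$ case). But the ``main obstacle'' you flag at the end is not a loose end to be tidied up --- it is the heart of the lemma, and the paper handles it by a tool that your outline never mentions. When $CK(V_0)$ has positive genus or more than one boundary component, the paper does \emph{not} extend the combinatorial estimate or massage \textbf{(C2)}. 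Instead it uses that the sequence in the contradiction hypothesis satisfies $Th(\Theta(t_n),r_n)=O$, so each $r_n$ produces an honest hyperbolic metric $\mu_n$ on $S$; degenerating disks then produce a short simple closed geodesic, and the Collar theorem gives a cylinder whose diameter blows up, contradicting the uniform diameter bound $2|E|R$ obtained from the already-established upper bound on the radii. This geometric step depends on actually having a hyperbolic surface, which never appears in your proposal.

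Your gestured-at fix (``enlarging $V_0$'' or ``choosing a system of pseudo-Jordan representatives'') is not carried out, and the difficulties are real: $CK(V_0)$ may be disconnected, $\partial\,\mathbb{CK}(V_0)$ may have non-manifold pinch points, and the edges you worry about (both endpoints outside $V_0$, yet interior to the star) break the clean identification of $\{e_l\}$ with a boundary cycle on which \textbf{(C2)} can be invoked. It is plausible that a purely combinatorial argument can be made to work --- your own formula, combined with the trivial bound $\Theta(e)<\pi$, already yields $\lim\sum_{v\in V_0}k(v)<0$ whenever $\chi(\mathbb{CK}(V_0))\leq 0$ and the boundary is a disjoint union of cycles --- but that is a genuinely different route from the paper's, and as written your proposal leaves the non-disk case unresolved. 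So there is a concrete gap: the Collar-theorem mechanism (or a fully worked-out combinatorial substitute) is missing.
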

 \begin{proof}
Let us exhaust $\mathbb R_{+}^{|V|}$ by an increasing sequence of relatively compact open sets $\{\Lambda_n\}$. Assume on the contrary that the lemma is not true. For each $n$, one obtains $t_n\in [0,1]$ and $r_n\in\mathbb R_{+}^{|V|}\setminus \Lambda_n$ satisfying
 \begin{equation}\label{Eq-11}
 k(v)(t_n\Theta,r_n)\,=\,0,\quad \forall\, v\in V.
  \end{equation}
Because $\{\Lambda_n\}$ exhausts $\mathbb R_{+}^{|V|}$,  there exist $v_0\in V$ and $\{r_{n_k}\}$ such that
  \[
  r_{n_k}(v_0)\,\to \,+\infty\quad \text{or} \quad r_{n_k}(v_0)\,\to\, 0.
  \]

In the first case, by Lemma \ref{L-2-8} and Remark \ref{R-2-9}, it follows from formula (\ref{Eq-1}) that
\[
k(v_0)(t_{n_k}\Theta,r_{n_k})\,\to\, 2\pi,
\]
which contradicts to (\ref{Eq-11}).

In the second case,  let $V_0\subset V$ be the set of vertices $v\in V$ for which $r_{n_k}(v)\to 0$. Then $V_0$ is a non-empty subset of $V$. We denote by $S(V_0)$ the union of these $q$-cells $(q=0,1,2)$ of $\mathcal T$ that have at least one vertex in $V_0$, and denote by $Lk(V_0)$ the set of pairs $(e,v)$ an edge $e$ and a vertex $v$ with the following properties:
\[
(i)\ v\in V_0;\quad (ii)\ \partial e\cap V_0=\emptyset;\quad (iii)\ e \ \mathrm{and}\ v \mathrm{\ form\  a\  triangle\ of\ } \mathcal T.
\]
Evidently, $S(V_0)$ is an open set of $S$ and thus is a surface. Suppose that $t_{n_k}$ converges to a number $t_\ast\in[0,1]$. Otherwise, one picks up a convergent subsequence. By the following Proposition \ref{P-3-2},  we have
\[
\sum\nolimits_{v\in V_0}k(v)(t_{n_k}\Theta,r_{n_k})\,\to\, -\sum\nolimits_{(e,v)\in Lk(V_0)}\big(\pi-t_\ast\Theta(e)\big)+2\pi\chi\big(S(V_0)\big).
\]
Combining  with formula (\ref{Eq-11}) gives
\[
0\,=\,-\sum\nolimits_{(e,v)\in Lk(V_0)}\big(\pi-t_\ast\Theta(e)\big)+2\pi\chi\big(S(V_0)\big).
\]
Without loss of generality, assume that $S(V_0)$ is connected. Otherwise, one  considers the connected component of $S(V_0)$. Note that
\[
\chi\big(S(V_0)\big)\,=\,2-2g_0-h_0,
\]
where $g_0, h_0$ are the genus and the number of boundary components of $S(V_0)$. Hence
\begin{equation}\label{Eq-12}
0\,=\,-\sum\nolimits_{(e,v)\in Lk(V_0)}\big(\pi-t_\ast\Theta(e)\big)+2\pi(2-2g_0-h_0).
\end{equation}

If $h_0=0$, then $S(V_0)$ has no boundary component. It follows that
\[
S(V_0)\,=\,S,\;\; V_0\,=\,V,\;\; Lk(V_0)\,=\,\emptyset, \;\;g_0\,=\,g.
\]
Substituting these into (\ref{Eq-12}), one derives
\[
0\,=\,2\pi(2-2g),
\]
which contradicts to the condition $g>1$.

Let $h_0>0$. If $g_0>0$ or $h_0>1$, it is easy to see  (\ref{Eq-12}) leads to a contradiction.

Let $g_0=0$ and $h_0=1$. Then $S(V_0)$ is a simply connected domain in $S$. Suppose that $Lk(V_0)=\{(e_i,v_i)\}_{i=1}^s$. Formula (\ref{Eq-12})  gives
\[
0\,=\,-\sum\nolimits_{i=1}^s\big(\pi-t_\ast\Theta(e_i)\big)+2\pi,
\]
which yields
\[
\sum\nolimits_{i=1}^s \Theta(e_i)\,\geq\, \sum\nolimits_{i=1}^s t_\ast\Theta(e_i)\,=\,(s-2)\pi.
\]
Meanwhile, one finds that $e_1,\cdots,e_s$ form a non-vacant pseudo-Jordan curve. According to \textbf{(C2)}, we have
\[
\sum\nolimits_{i=1}^s \Theta(e_i)\,<\,(s-2)\pi,
\]
which leads to a contradiction.
\end{proof}

\begin{proposition}\label{P-3-2}
Let $S(V_0)$ and $Lk(V_0)$ be as above. Then
\[
\sum\nolimits_{v\in V_0}k(v)(t_{n_k}\Theta,r_{n_k})\,\to\,
 -\sum\nolimits_{(e,v)\in Lk(V_0)}\big(\pi-t_\ast\Theta(e)\big)+2\pi\chi\big(S(V_0)\big).
\]
\end{proposition}
\begin{proof}
For $m=1,2,3$, let $F_m(V_0)$ be the set of triangles  having exactly $m$ vertices in $V_0$. Using Lemma \ref{L-2-8}, it follows from formulas (\ref{Eq-2}), (\ref{Eq-3}) and (\ref{Eq-4}) that
\[
\sum\nolimits_{v\in V_0}k(v)(t_{n_k}\Theta,r_{n_k})\,\to\,2\pi|V_0| -\sum\nolimits_{(e,v)\in Lk(V_0)}\big(\pi-t_\ast\Theta(e)\big)-\pi|F_2(V_0)|-\pi|F_3(V_0)|.
\]
Let $E(V_0), F(V_0)$ denote the sets of edges and triangles having at least one vertex in $V_0$. It is easy to see
\[
|F(V_0)|\,=\,|F_1(V_0)|+|F_2(V_0)|+|F_3(V_0)|.
\]
Meanwhile, note that
\[
|F_1(V_0)|\,=\,|Lk(V_0)|
\]
and
\[
3|F(V_0)|\,=\,2|E(V_0)|+|Lk(V_0)|.
\]
Combining the above relations gives
\[
\begin{aligned}
   &2|V_0|-|F_2(V_0)|-|F_3(V_0)|\\
=\,&2|V_0|-|F(V_0)|+|F_1(V_0)|-\big(|F_1(V_0)|-|Lk(V_0)|\big)\\
=\,&2|V_0|-|F(V_0)|+|Lk(V_0)|+\big(3|F(V_0)|-2|E(V_0)|-|Lk(V_0)|\big)\\
=\,&2\big(|V_0|-|E(V_0)|+|F(V_0)|\big)\\
=\,&2\chi\big((S(V_0)\big).
\end{aligned}
\]
As a result, we have
\[
\sum\nolimits_{v\in V_0}k(v)(t_{n_k}\Theta,r_{n_k})\,\to\, -\sum\nolimits_{(e,v)\in Lk(V_0)}\big(\pi-t_\ast\Theta(e)\big)+2\pi\chi\big(S(V_0)\big).
\]
\end{proof}

\begin{theorem}\label{T-3-2}
Let $Th(\Theta,\cdot)$ and $\Lambda$ be as above. Then
\[
\deg(Th(\Theta,\cdot),\Lambda,\mathrm{O})\ =\ 1.
\]
\end{theorem}
\begin{proof}
It suffices to compute $\deg(Th_0,\Lambda,\mathrm{O})$. By Theorem \ref{T-1-1}, $\Lambda\cap Th_{0}^{-1}(\mathrm{O})$ consists of a unique point. Using Lemma \ref{L-2-2}, one shows that the Jacobian matrix of $Th_{0}(\cdot)$ has positive diagonal entries and strictly diagonally dominant columns. As a result, the determinant of this matrix is positive and the tangent map preserves the orientation. Therefore,
\[
\deg(Th_0,\Lambda,\mathrm{O})\ =\ 1.
\]
Owing to Lemma \ref{L-3-1} and Theorem \ref{T-6-7}, one deduces the conclusion.
\end{proof}

\begin{proof}[\textbf{Proof of Theorem \ref{T-1-2}}]
Because of Theorem \ref{T-3-2} and Theorem \ref{T-6-8}, $\mathrm{O}$ is in the image of the map $Th(\Theta,\cdot)$. Consequently, there exists a hyperbolic metric $\mu$ on $S$ such that $(S,\mu)$ supports a $\mathcal T$-type circle pattern $\mathcal P$ with the exterior intersection angles given by $\Theta$.
\end{proof}

\section{Rigidity}\label{S-4}

\subsection{Three-circle configurations revisited}
For a triple of indices $i,j,k$, recall that $\lambda_{ijk}=\cos\Theta_i+\cos\Theta_j\cos\Theta_k$. Under the condition that $\lambda_{ijk}\geq 0, \lambda_{jki}\geq 0, \lambda_{kij} \geq 0$, it follows from Remark \ref{R-2-6} that the inner angles $\vartheta_i,\vartheta_j,\vartheta_k$ are well-defined functions of  $(r_i,r_j,r_k)$. Our key observation in this section is the following generalization of Lemma \ref{L-2-2}.

\begin{lemma}\label{L-4-1}
Suppose $\Theta_i,\Theta_j,\Theta_k\in [0,\pi)$ satisfy
\[
\lambda_{ijk}\,\geq\,0,\;\; \lambda_{jki}\,\geq\, 0,\;\; \lambda_{kij}\,\geq\,0.
\]
Then
\[
\frac{\partial\vartheta_i}{\partial r_i}\,<\,0,\;\; \sinh r_j\frac{\partial\vartheta_i}{\partial r_j}\,=\,\sinh r_i\frac{\partial\vartheta_j}{\partial r_i}\,\geq\,0,\;\; \frac{\partial(\vartheta_i+\vartheta_j+\vartheta_k)}{\partial r_i} \,<\,0.
\]
\end{lemma}

\begin{proof}
We mention that part of computations here is parallel to these in Colin de Verdi\`{e}re \cite{Colin}, Chow-Luo \cite{Chow-Luo}, Guo-Luo \cite{Guo-Luo}, Guo \cite{Guo}, Xu \cite{Xu} and others. For simplicity, we use the same notations in the proof of Lemma \ref{L-2-4}. By the cosine law of hyperbolic triangles,
\[
\cos\vartheta_i\ =\ \frac{\cosh l_j\cosh l_k-\cosh l_i}{\sinh l_j\sinh l_k}.
\]
Taking the partial derivative with respect to $l_i$ gives
\[
\frac{\partial \vartheta_i}{\partial l_i}\ =\ \frac{\sinh l_i}{\sin \vartheta_i\sinh l_j\sinh l_k}\ =\ \frac{\sinh l_i}{K_{ijk}},
\]
where
\[
K_{ijk}\ =\ \sin\vartheta_i\sinh l_j\sinh l_k.
\]
By the sine law of hyperbolic triangles, it is easy to see
\[
K_{ijk}\ =\ K_{jki}\ =\ K_{kij}\ :=\ K.
\]
Hence
\begin{equation}\label{Eq-13}
\frac{\partial \vartheta_i}{\partial l_i}\ =\ \frac{\sinh l_i}{K}.
\end{equation}
Similarly,
\begin{equation}\label{Eq-14}
\frac{\partial \vartheta_i}{\partial l_k}\ =\ -\frac{\sinh l_i\cos \vartheta_j}{K}.
\end{equation}
Meanwhile,
\begin{equation}\label{Eq-15}
\frac{\partial l_i}{\partial r_j}\,=\, \frac{\cosh r_k\sinh r_j+\cos\Theta_i\cosh r_j\sinh r_k}{\sinh l_i}\,=\, \frac{a_k x_j+\cos\Theta_ia_jx_k}{\sinh l_i}.
\end{equation}
Combining (\ref{Eq-13}), (\ref{Eq-14}) and (\ref{Eq-15}), one obtains
\begin{equation}\label{Eq-16}
\begin{aligned}
\sinh r_j\frac{\partial\vartheta_i}{\partial r_j}
\,=&\,\sinh r_j\,\bigg(\frac{\partial\vartheta_i}{\partial l_i}\frac{\partial l_i}{\partial r_j}+\frac{\partial\vartheta_i}{\partial l_k}\frac{\partial l_k}{\partial r_j}\bigg)\\
=&\, \frac{\sin^2 \Theta_k a_k^2x_i^2x_j^2+(\lambda_{ijk}a_jx_i+\lambda_{jki}a_ix_j)x_ix_jx_k}{K\sinh^2 l_k}.
\end{aligned}
\end{equation}
It follows that
\[
\sinh r_j\frac{\partial\vartheta_i}{\partial r_j}\,=\,\sinh r_i\frac{\partial\vartheta_j}{\partial r_i}\,\geq\,0,
\]
where the equality holds if and only if $\Theta_k=0$ and $\Theta_i+\Theta_j=\pi$.

Fix $r_j, r_k$ and let $r_i$ vary. Then $l_i$ stays constant. Because $\partial\vartheta_j/\partial r_i\geq 0$ and $\partial\vartheta_k/\partial r_i\geq 0$, that means the other two sides move outwards or remain unchanged as $r_i$ increases. Note that these two sides can not stay unchanged simultaneously. Thus the area $\mathrm{Area}(\triangle)$ of the triangle is a strictly increasing function of $r_i$. By the Gauss-Bonnet formula, one derives
\[
\frac{\partial(\vartheta_i+\vartheta_j+\vartheta_k)}{\partial r_i}\,=\,-\frac{\partial}{\partial r_i}\mathrm{Area}(\triangle)\,<\,0.
\]
Furthermore,
\[
\frac{\partial\vartheta_i}{\partial r_i}\,=\,\frac{\partial(\vartheta_i+\vartheta_j+\vartheta_k)}{\partial r_i}-\frac{\partial\vartheta_j}{\partial r_i}-\frac{\partial\vartheta_k}{\partial r_i}\,<\,0.
\]
\end{proof}

\begin{remark}\label{R-4-2}
Assume that $\lambda_{ijk}<0$. For any $t>0$, set
\[
x_i\,=\,-\lambda_{ijk}\big(\,1+|\lambda_{jki}|\,\big)\,t,\;\;
x_j\,=\,\lambda_{ijk}^2\,t,\;\;
x_k\,=\,-3\lambda_{ijk}\big(\,1+|\lambda_{jki}|\,\big)\,t.
\]
Using (\ref{Eq-16}), one finds that $\partial\vartheta_i/\partial r_j<0$ when $t$ is sufficiently small. Thus the condition of Lemma \ref{L-4-1} can not be relaxed further.
\end{remark}

\subsection{Variational principle}
Let us establish rigidity via the variational approach poineered by Colin de Verdi\`{e}re \cite{Colin}. Remind that similar methods have been used by Rivin \cite{Rivin1,Rivin2}, Bobenko-Springborn \cite{Bobenko-Springborn}, Guo-Luo \cite{Guo-Luo}, Guo \cite{Guo} and others in many situations.

Suppose that $V=\{v_1,\cdots,v_{|V|}\}$. To simplify notations, for $i=1,2,\cdots,|V|$, set
\[
r_i\,:=\,r(v_i),\quad k_i\,:=\,k(v_i).
\]
For a vertex $v_i\in V$ and a triangle $\bigtriangleup\in F$ incident to $v_i$, denote by $\vartheta_i^{\triangle}$ the inner angle of the triangle $\triangle$ at $v_i$. Then
\[
k_i\,=\,2\pi-\Big(\,\sum\nolimits_{\triangle\in F(\{v_i\})} \vartheta_i^{\triangle}\,\Big).
\]
Consider the change of variables $u_i=\ln \tanh (r_i/2)$. It is easy to see
\[
\frac{\partial}{\partial u_i}\,=\,\sinh r_i\frac{\partial}{\partial r_i}.
\]
Regarding vertex curvatures as functions of $u$ induces the map
\[
\begin{aligned}
Th(\Theta,\cdot):\quad\qquad &\,\mathbb{R}_{-}^{|V|} \qquad
&\,\longrightarrow \,\qquad \qquad \quad &\,\mathbb{R}^{|V|} \\
\big(u_1,u_2,&\cdots,u_{|V|}\big)&\longmapsto\quad\quad\;\big(k_1, k_2,&\cdots,k_{|V|}\big).\\
 \end{aligned}
\]

\begin{lemma}\label{L-4-3}
Under the condition \emph{\textbf{(R1)}}, the Jacobian matrix of $Th(\Theta,\cdot)$ in terms of $u$ is symmetric and positive definite.
\end{lemma}

\begin{proof}
In case that $v_i,v_j$ is a pair of non-adjacent vertices, then
\[
\frac{\partial k_i}{\partial u_j}\,=\,\frac{\partial k_j}{\partial u_i}\,=\,0.
\]
In case that $[v_i,v_j]=e\in E$, there exist triangles $\triangle_1,\triangle_2$ adjacent to $e$. Lemma \ref{L-4-1} gives
\[
\frac{\partial k_i}{\partial u_j}\,=\,-\frac{\partial \vartheta_{i}^{\triangle_1}}{\partial u_j}-\frac{\partial \vartheta_{i}^{\triangle_2}}{\partial u_j}
\,=\,-\frac{\partial \vartheta_{j}^{\triangle_1}}{\partial u_i}-\frac{\partial \vartheta_{j}^{\triangle_2}}{\partial u_i}
\,=\,\frac{\partial k_j}{\partial u_i}\,\leq\,0.
\]
Hence the Jacobian matrix of $Th(\Theta,\cdot)$ in terms of $u$ is symmetric.

Meanwhile, a simple computation shows
\[
\frac{\partial k_i}{\partial u_i}\,=\,-\sum\nolimits_{\triangle\in F(\{v_i\})}\frac{\partial \vartheta_{i}^{\triangle}}{\partial u_i}\,>\,0.
\]
It follows that
\[
\bigg|\frac{\partial k_i}{\partial u_i}\bigg|-\sum\nolimits_{j\neq i}\bigg|\frac{\partial k_i}{\partial u_j}\bigg|\,=\,\sum\nolimits_{j=1}^{|V|}\frac{\partial k_i}{\partial u_j}\,=\,\sum\nolimits_{j=1}^{|V|}\frac{\partial k_j}{\partial u_i}\,=\,\sum\nolimits_{\triangle\in F(\{v_i\})}\frac{\partial}{\partial u_i}\mathrm{Area}(\triangle)\,>\,0.
\]
The Jacobian matrix is strictly diagonally dominant and hence is positive definite.
\end{proof}

Now it is ready to prove Theorem \ref{T-1-5}.

\begin{proof}[\textbf{Proof of Theorem \ref{T-1-5}}]
Consider the following $1$-form
\[
\omega\,=\,\sum\nolimits_{i=1}^{|V|}k_i \mathrm{d} u_i.
\]
Because $\partial k_i/\partial u_j=\partial k_j/\partial u_i$, it is easy to see  $\omega$ is closed. Thus the function
\[
\Phi(u)\,=\,\int_{u(0)}^u \omega
\]
is well-defined and does not depend on the particular choice of a
piecewise smooth arc in $\mathbb R^{|V|}_{-}$ from an initial point $u(0)$ to $u$. Note that
 the Hessian of $\Phi(\cdot)$ is equal to the Jacobian of $Th(\Theta,\cdot)$, which is positive definite by Lemma \ref{L-4-3}. That means $\Phi$ is a strictly convex function of $u$.

From Thurston's construction, the demanded  circle pattern is related to the critical point of $\Phi$. Because $\Phi$ is strictly convex, the critical point must be unique. We thus finish the proof.
\end{proof}

\begin{remark}\label{R-4-4}
Under the condition \textbf{(R1)}, the above analysis shows that the Jacobian matrix of  $Th(\Theta,\cdot)$ is non-singular. It follows from the Implicit Function Theorem that the demanded radius vector $r_\ast$  depends on $\Theta$ smoothly.
\end{remark}

\begin{remark}\label{R-4-5}
Based on observations in above parts, Xu \cite{Xu} recently studied rigidity of circle patterns with inversive distances. His work generalizes the results of Guo-Luo \cite{Guo-Luo} and Theorem \ref{T-1-5}.
\end{remark}

\begin{remark}\label{R-4-6}
Given an initial point, let it evolve with the negative gradient flow of $\Phi$. Following  Chow-Luo \cite{Chow-Luo}, one can show that the flow converges exponentially fast to the demanded circle pattern vector. See the recent work of Ge-Hua-Zhou \cite{Ge-Hua-Zhou} for details.
\end{remark}

\begin{proof}[\textbf{Proof of Theorem \ref{T-1-7}}]
The existence is an immediate consequence of Theorem \ref{T-1-2}.

For rigidity, suppose that $e_1,e_2,e_3$ form the boundary of a triangle of $\mathcal T$. According to the condition, we have
\[
\Theta(e_1)+\Theta(e_2)+\Theta(e_3)\ <\ \pi.
\]
Recall that $I(e_i)=\cos\Theta(e_i)$ for $i=1,2,3$. By Proposition \ref{P-2-7}, one obtains
\[
I(e_1)+I(e_2)I(e_3)\,\geq\, 0,\;\;
I(e_2)+I(e_3)I(e_1)\,\geq\, 0,\;\;
I(e_3)+I(e_1)I(e_2)\,\geq\, 0.
\]
It follows from Theorem \ref{T-1-5} that the rigidity holds.
\end{proof}

\section{Finiteness}\label{S-5}
To obtain the proof Theorem \ref{T-1-8}, some knowledge on Teichm\"{u}ller theory is needed. One refers to  \cite{Ahlfors,Imayoshi-Taniguchi} for basic background. See also \cite{Buser} from the geometric and topological viewpoint.

\subsection{Configuration spaces}
Recall that $S$ is an oriented closed surface of genus $g>1$. Denote by $\mathcal T(S)$ the Teichm\"{u}ller space of $S$, which parameterizes the equivalence classes of marked hyperbolic metrics on $S$. Here a marking is an isotopy class of orientation-preserving homeomorphism from $S$ to itself. And two marked hyperbolic metrics $\mu,\mu'\in \mathcal T(S)$ are equivalent if there exists an isometry $\phi: (S,\mu)\to(S,\mu')$ isotopic to the identity map. $\mathcal T(S)$ admits the structure of a smooth manifold of dimension $6g-6$.

Let $p_i,p_j\in S$ be two points and let $\gamma$ be a simple curve with endpoints $p_i,p_j$. For any $\mu\in \mathcal T(S)$, on $(S,\mu)$
there exists a unique geodesic in the isotopy class $[\gamma]$ with the endpoints $p_i,p_j$. Let $ d_{[\gamma]}(\mu,p_i,p_j)$ denote the length of this geodesic. Under the above manifold structure, $d_{[\gamma]}(\mu, p_i,p_j)$ depends on $\mu$ smoothly. See Buser's monograph \cite[Chap. 6]{Buser} for details.

We endow $S$ with a smooth structure and define a space $Z=\mathcal T(S)\times(S\times \mathbb R_{+})^{|V|}$.
Then $Z$ is a smooth manifold of dimension
\[
\dim\ (Z)\ =\ 6g-6+3|V|.
\]
Note that
\[
 2|E|\ =\ 3|F|.
\]
Therefore,
\[
\begin{aligned}
\dim\ (Z) \ =\ &6g-6+3|V|-\big(2|E|-3|F|\big)\\
 =\ &6g-6+3\big(|V|-|E|+|F|\big)+|E|\\
=\ &|E|.
\end{aligned}
\]
A point $z=(\mu,p_1,r_1,\cdots,p_{|V|},r_{|V|})\in Z$
is called a configuration, since it gives a choice of a marked hyperbolic metric $\mu$ on $S$ and assigns each vertex $v_i$ an oriented circle $C_i$  on $(S,\mu)$, where $C_i$ denotes the circle centered at $p_i$ with radius $r_i$. To summarise, a configuration $z$ gives a marked hyperbolic metric $\mu$ on $S$ together with a circle pattern $\mathcal P$ on $(S,\mu)$. Briefly, we say $z$ gives a circle pattern pair $(\mu,\mathcal P)$.

Let $Z_\mathcal T\subset Z$ denote the subspace of configurations that give $\mathcal T$-type circle pattern pairs. More precisely, $z\in Z_{\mathcal T}$ if and only if there exists a geodesic triangulation $\mathcal T(z)$ of $(S,\mu)$ such that $\mathcal T(z)$ is isotopic to $\mathcal T$ and the vertices of $\mathcal T(z)$ are  $p_1,p_2,\cdots,p_{|V|}$. Clearly, $Z_\mathcal T$ is open in $Z$ and thus is a smooth manifolds of dimension $|E|$.

For each $e=[v_i,v_j]\in E$, the inversive distance $I(e,z)$ is defined by
\[
I(e,z)\ =\ \frac{\cosh d_{[\gamma_e]}(\mu,p_i,p_j)-\cosh r_i\cosh r_j}{\sinh r_i\sinh r_j},
\]
where $[\gamma_e]$ represents the isotopy class of the edge $e$. Let $Z_{\mathcal {P}}\subset Z_\mathcal T$ be the subspace of configurations $z$ for which
\[
-1\,<\,I(e,z)\,<\,1,\;\forall\, e\in E.
\]
Because $I(e,z)$ is continuous, $Z_{\mathcal P}$ is open in $Z_{\mathcal T}$, which implies $Z_{\mathcal P}$ is a smooth manifold of dimension $|E|$. Obviously, a configuration $z\in Z_{\mathcal P}$ gives a $\mathcal T$-type circle pattern with the exterior intersection angle $\Theta(e,z)\in (0,\pi)$ for each $e\in E$, where
\[
\Theta(e,z)\ =\ \arccos I(e,z).
\]
One establishes the following smooth map
\[
\begin{aligned}
\mathcal{E}v:\quad &Z_{\mathcal P} &\longrightarrow \quad &\quad\;  Y:=(0,\pi)^{|E|}\\
&\;z\;  &\longmapsto \quad  & \big(\Theta(e_1,z),\Theta(e_2,z),\cdots\big).
\end{aligned}
\]

\subsection{Rigidity from the viewpoint of Sard's Theorem}
Note that a regular point of the above map gives a circle pattern pair which is locally determined up to isometry by its exterior intersection angles. Sard's Theorem indicates that local rigidity is a generic property.

\begin{proof}[\textbf{Proof of Theorem \ref{T-1-8}}]
Set $W_0=\mathcal Ev(Z_{0})$, where $Z_{0}\subset Z_{\mathcal P}$ denotes the set of critical points of $\mathcal Ev$. Owing to Sard's Theorem (Theorem \ref{T-6-2}), $W_0$ has zero measure. Meanwhile, the boundary set $\partial W$ of $W$ also has zero measure. Thus $W\setminus (W_0\cup\partial W)$ is a subset of $W$ with full measure. Note that
\[
\dim\, (Z_{\mathcal P})\,=\,\dim\, (Y)\,=\,|E|.
\]
For each $\Theta\in W\setminus (W_0\cup\partial W)\subset Y$, the Regular Value Theorem (Theorem \ref{T-6-1}) implies that $\mathcal Ev^{-1}(\Theta)$ is a discrete set. Combining with the following Proposition \ref{P-5-1}, this set must be finite. Thus the theorem is proved.
 \end{proof}

\begin{proposition}\label{P-5-1}
Let $\Theta$ be as above. Then $\mathcal Ev^{-1}(\Theta)$ is compact.
\end{proposition}

\begin{proof}
It suffices to show any sequence $\{z_n\}\subset \mathcal Ev^{-1}(\Theta)$ contains a convergent subsequence in $Z_{\mathcal P}$. Let $(\mu_n, \mathcal P_n)$ be the circle pattern pair give by $z_n$.

First we claim that there exists a subsequence $\{z_{n_k}\}$  converging to a point $z_\ast\in Z$. In view of the topology of $Z=\mathcal T(S)\times(S\times \mathbb R_{+})^{|V|}$, one needs to check the following two properties:
\begin{itemize}
\item[$1.$] No circle in $\mathcal P_n$ degenerates to a point or becomes infinitely large.
\item[$2.$] The sequence $\{\mu_n\}$ is included in a compact subset of $\mathcal T(S)$.
\end{itemize}

Because $\Theta$ satisfies \textbf{(C1)}, \textbf{(C2)}, similar arguments to the proof of Lemma \ref{L-3-1} show that the first property holds.

For the second property, suppose it is not true. By Fenchel-Nielsen's coordinates \cite{Imayoshi-Taniguchi,Buser}, that means there exists at least one simple closed geodesic $\gamma_n$ in $(S,\mu_n)$ whose length $\ell({\gamma_n})$ tends to zero or infinity. One claims that the diameter of $(S,\mu_n)$ tends to infinity. Indeed, if $\ell({\gamma_n})\to +\infty$, the statement trivially holds. If $\ell({\gamma_n})\to 0$, it follows from the Collar Theorem \cite[Chap. 4]{Buser} that there is an embedding cylinder domain $Cy(\gamma_n)$ in $(S,\mu_n)$ such that
\[
Cy(\gamma_n)\ =\ \big\{p\in(S,\mu_n)\,|\,\dist(p,\gamma_n)\leq d_n\big\},
\]
where $d_n>0$ satisfies
\[
\sinh\big(\ell(\gamma_n)/2\big)\sinh d_n\,=\,1.
\]
Note that $d_n\to\infty$ as $\ell(\gamma_n)\to 0$, which also implies the diameter of $(S,\mu_n)$ tends to infinity. On the other hand, because the radius of every circle in $\mathcal P_n$ is bounded from above, the diameter of $(S,\mu_n)$ must be bounded from above. This leads to a contradiction.

Next we show $z_\ast\in Z_{\mathcal T}$. Denote by $\mathcal T_{n_k}$ the geodesic triangulation for $\mathcal P_{n_k}$.
One needs to check that no triangle of $\mathcal T_{n_k}$ becomes infinitely large or degenerates to a point or a line segment. Using Lemma \ref{L-2-4}, this follows from the first property.

To finish the proof, it remains to prove $z_\ast\in Z_{\mathcal P}$. Because $\{z_{n_k}\}\subset \mathcal Ev^{-1}(\Theta)\subset Z_\mathcal P$, for any $e\in E$, we have $\Theta(e,z_{n_k})=\Theta(e)$. As $n_k\to\infty$, one obtains $\Theta(e,z_\ast)=\Theta(e)\in(0,\pi)$, which implies $z_\ast\in Z_{\mathcal P}$.
\end{proof}

\section{Appendix}\label{S-7}
 In this section we give a simple introduction to some results on manifolds, especially the topological degree theory. One refers to \cite{Guillemin, Hirsch,Milnor} for more background.

 Let $M, N$ be two smooth manifolds. A point $x\in M$ is called a critical point for a $C^1$ map $f: M\to N$ if the tangent map  $df_x: T_x M\to T_{f(x)}N$ is not surjective. Let $C_f$  denote the set of critical points of $f$. And $N\setminus f(C_f)$ is defined to be the set of regular values of $f$.

\begin{theorem}[Regular Value Theorem]\label{T-6-1}
Let $f: M\to N$ be a $C^r$ ($r\geq 1$) map, and let $y\in N$ be a regular value of $f$. Then $f^{-1}(y)$ is a closed $C^r$ submanifold of $M$.
If $y\in im(f)$, then the  codimension of $f^{-1}(y)$ is  equal to the dimension of $N$.
 \end{theorem}

\begin{theorem}[Sard's Theorem]\label{T-6-2}
Let $M, N$ be manifolds of dimensions $m,n$ and $f:M\to N$ be a $C^r$ map. If $r\geq\max\{1,m-n+1\}$, then $f(C_f)$ has zero measure in $N$.
\end{theorem}

Now assume that $M,N$ are oriented  manifolds of the same dimensions. Let $\Lambda \subset M$ be a relatively compact open subset. Namely, $\Lambda$ has compact closure in $M$. For a point $y\in N$, we use $f\mbox{\large $\pitchfork$}_{\Lambda}y$ to denote that $y$ is a regular value of the restriction map $f: \Lambda \to N$.

Suppose $f\in C^{0}(\bar{\Lambda},N)\cap C^{\infty}(\Lambda,N)$, $f\mbox{\large $\pitchfork$}_{\Lambda}y$, and $f(\partial \Lambda)\in N\setminus\{y\}$. If $\Lambda \cap f^{-1}(y)$ is empty, the topological degree  $\deg(f,\Lambda,y)$ is defined to be zero. If $\Lambda \cap f^{-1}(y)$ is non-empty, the Regular Value Theorem implies that it consists of finite points.

\begin{definition}
Suppose $\Lambda \cap f^{-1}(y)=\{x_1,\cdots,x_s\}$. Set
\[
\deg(f,\Lambda,y)\,=\,\sum\nolimits_{i=1}^s \mathrm{sgn}(f,x_i).
\]
Here $\mathrm{sgn}(f,x_i)=+1$, if
the tangent map $df_{x_i}:T_{x_i} M \to T_{y}N$  preserves the orientation;
Otherwise, $\mathrm{sgn}(f,x_i)=-1$.
\end{definition}

In what follows, $I$ represents the interval $[0,1]$.
\begin{proposition}\label{P-6-4}
Suppose  $f_{i}\in C^{0}(\bar{\Lambda},N)\cap
C^{\infty}(\Lambda,N)$, $f_{i}\mbox{\large $\pitchfork$}_{\Lambda}y
$ and $f_{i}(\partial \Lambda)\subset N\setminus\{y\}$, $i=0,1$. If there
exists a homotopy $
H \in C^{0}({I\times \bar\Lambda},N)$
such that
\begin{itemize}
\item[$(i)$]$H(0,\cdot)=f_0(\cdot)$,  $H(1,\cdot)=f_1(\cdot)$,
\item[$(ii)$] $H(I\times\partial \Lambda)\subset N\setminus\{y\}$,
 \end{itemize}
 then
\[
\deg(f_{0},\Lambda,y)\ =\ \deg(f_{1},\Lambda,y).
\]
\end{proposition}

The following lemma is a consequence of Sard's Theorem.
\begin{lemma}\label{L-6-5}
Given $f\in C^{0}(\bar{\Lambda},N)$ and $y\in N$, if $f(\partial \Lambda)\subset N\setminus\{y\}$, then there exists $g\in C^{0}(\bar{\Lambda},N)\cap
C^{\infty}(\Lambda,N)$ and $H\in C^{0}(I\times\bar{\Lambda},N)$ such
that
\begin{itemize}
\item[$(i)$] $g\mbox{\large $\pitchfork$}_{\Lambda} y$,
\item[$(ii)$] $H(0,\cdot)=f(\cdot)$, $H(1,\cdot)=g(\cdot)$,
\item[$(iii)$] $H(I\times\partial \Lambda)\subset N\setminus\{y\}$.
\end{itemize}
\end{lemma}

It is ready to define the topological degrees for general continuous maps.
\begin{definition}
For any $f\in C^{0}(\bar{\Lambda},N)$ and $y\in N$ that satisfy $f(\partial \Lambda)\subset N\setminus \{y\}$,  one defines
\[
\deg(f,\Lambda,y)\ =\ \deg(g,\Lambda,y),
\]
where $g$ is given by Lemma \ref{L-6-5}.
\end{definition}

By Proposition \ref{P-6-4}, $\deg(f,\Lambda,y)$ is well-defined and does not depend on the particular choice of $g$. Below are some properties of topological degrees.

\begin{theorem}\label{T-6-7}
Suppose $f_{i}\in C^{0}(\bar{\Lambda},N)$ satisfies $f_{i}(\partial \Lambda)\subset N\setminus \{y\}$, $i=0,1$. If there exists a homotopy $H\in C^{0}(I\times\bar{\Lambda},N)$ such that
\begin{itemize}
\item[$(i)$] $H(0,\cdot)=f_{0}(\cdot)$, $H(1,\cdot)=f_{1}(\cdot)$,
\item[$(ii)$] $H(I\times\partial \Lambda)\subset N\setminus \{y\}$,
\end{itemize}
then
\[
\deg(f_{0},\Lambda,y)\ =\ \deg(f_{1},\Lambda,y).
\]
\end{theorem}


\begin{theorem}\label{T-6-8}
If $\deg\big(f,\Lambda,y\big)\neq 0$, then  $\Lambda \cap
f^{-1}(y)\neq\emptyset$.
\end{theorem}

\section{Acknowledgement}
The first version of this paper considered Lemma \ref{L-2-4} under the condition that $\Theta_i+\Theta_j\leq \pi, \Theta_j+\Theta_k\leq \pi, \Theta_k+\Theta_i\leq \pi$. During a workshop on discrete and computational geometry at the Capital Normal University in Beijing,  Feng Luo, Xu Xu and Qianghua Luo suggested the current condition. The author is very grateful for their generous sharing. Part of this work was done when the author was visiting Rutgers University, he would like to thank for its hospitality. He also would like to thank  NSF of China
(No.11601141 and No.11631010) and China Scholarship Council (No. 201706135016) for financial support.

\noindent Ze Zhou, zhouze@hnu.edu.cn\\[2pt]
\emph{Institute of Mathematics, Hunan University, Changsha, 410082, P.R. China}
\end{document}